

\documentclass[authoryear,preprint,review,12pt]{elsarticle}




\usepackage{amsmath,amsthm,amsfonts,amscd,amssymb,bm,mathrsfs}
\usepackage{enumerate}
\usepackage{graphicx,xcolor}
\usepackage{epstopdf}
\usepackage{subfigure}
\usepackage{multirow}
\usepackage{cases}
\DeclareMathOperator*{\argmin}{arg\,min}

\newcommand{\B}{{\mathbb{B}}}

\newcommand{\I}{{\mathbb{I}}}
\newcommand{\Pro}{{\mathbb{P}}}
\newcommand{\R}{{\mathbb{R}}}

\newcommand{\loss}{{\mathcal{L}}}

\newtheorem{Assumption}{Assumption}
\newtheorem{Lemma}{Lemma}

\newtheorem{Remark}{Remark}
\newtheorem{Theorem}{Theorem}




\begin{document}

\begin{frontmatter}



\title{Minimax rates of $\ell_p$-losses for high-dimensional linear regression models with additive measurement errors over $\ell_q$-balls}


\author[1]{Xin Li}
\ead{lixin@nwu.edu.cn}
\author[2]{Dongya Wu}
\ead{wudongya@nwu.edu.cn}

\address[1]{School of Mathematics, Northwest University, Xi’an, 710069, China}
\address[2]{School of Information Science and Technology, Northwest University, Xi’an, 710069, China}


\begin{abstract}

We study minimax rates for high-dimensional linear regression with additive errors under the $\ell_p\ (1\leq p<\infty)$-losses, where the regression parameter is of weak sparsity. Our lower and upper bounds agree up to constant factors, implying that the proposed estimator is minimax optimal.

\end{abstract}

%

\begin{keyword}


High dimension \sep Sparse linear regression \sep Additive error \sep Minimax rate

\end{keyword}

\end{frontmatter}


\section{Introduction}

Consider the standard linear regression model
\begin{equation}\label{ordi-linear}
y_i=\langle \beta^*,X_{i\cdot} \rangle+e_i,\quad \text{for}\ i=1,2\cdots,m,
\end{equation}
where $\beta^*\in \R^n$ is the unknown parameter and $\{(X_{i\cdot},y_i)\}_{i=1}^m$ are i.i.d. observations, which are assumed to be fully-observed in standard formulations. However, this assumption is not realistic for many applications, in which the covariates $X_{i\cdot}\ (i=1,2,\cdots,m)$ can only be measured imprecisely and one can only observe the pairs $\{(Z_{i\cdot},y_i)\}_{i=1}^m$ instead, where $Z_{i\cdot}$'s are corrupted versions of the corresponding $X_{i\cdot}$'s; see, e.g., \citet{carroll2006measurement}. This is known as the measurement error model in the literature.

Estimation in the presence of measurement errors has attracted a lot of interest for a long time. In 1987, Bickel and Ritov first studied the linear measurement error models and proposed an efficient estimator \citep{bickel1987efficient}. Then Stefanski and Carroll investigated the generalized linear measurement error models and constructed consistent estimators \citep{stefanski1987conditional}. Extensive results have also been established on parameter estimation and variable selection for both parametric or nonparametric settings; see \cite{huwang2002prediction, tsiatis2004locally, delaigle2007nonparametric} and references therein.

Recently, in the context of high dimension (i.e., $m\ll n$), Loh and Wainwright studied the sparse linear regression with the covariates are corrupted by additive errors, missing and dependent data. Though the proposed estimator involves solving a nonconvex optimization problem, they proved that the global and stationary points are statistically consistent; see \cite{loh2012high, loh2015regularized}, respectively. The proposed estimator was also shown to be minimax optimal in the additive error case under the $\ell_2$-loss, assuming that the true parameter is exact sparse, that is, $\beta^*$ has at most $s\ll n$ nonzero elements \citep{loh2012corrupted}. However, the ``exact sparse'' assumption may be sometimes too restrictive in real applications. For instance, in image processing, it is standard that wavelet coefficients for images always exhibit an exponential decay, but do not need to be almost $0$ (see, e.g., \cite{ mallat1989theory}). Other applications include signal processing, medical imaging reconstruction, remote sensing and so on. Hence, it is necessary to investigate the minimax rate of estimation when the ``exact sparse'' assumption does not hold.

In this study, we consider the sparse high-dimensional liner model with additive errors. By assuming the regression parameter is of weak sparsity, we establish the minimax rates of estimation in terms of $\ell_p\ (1\leq p<\infty)$-losses. The proposed estimator is also shown to be minimax optimal in the $\ell_2$-loss.

\section{Problem setup}

Recall the standard linear regression model \eqref{ordi-linear}. One of the main types of measurement errors is the additive error. Specifically, for each $i=1,2,\cdots,m$, we observe $Z_{i\cdot}=X_{i\cdot}+W_{i\cdot}$, where $W_{i\cdot}\in \R^n$ is a random vector independent of $X_{i\cdot}$ with mean 0 and known covariance matrix $\Sigma_w$. Throughout this paper, we assume that, for $i=1,2,\cdots,m$, the vectors $X_{i\cdot}$, $W_{i\cdot}$ and $e_i$ are Gaussian with mean 0 and covariance matrices $\sigma_x^2\I_n$, $\sigma_w^2\I_n$ and $\sigma_e^2\I_m$, respectively, and we write $\sigma_z^2=\sigma_x^2+\sigma_w^2$ for simplificity.

Following a line of past works \citep{loh2012high, loh2015regularized}, we fix $i\in \{1,2,\cdots,m\}$ and use $\Sigma_x$ to denote the covariance matrix of $X_{i\cdot}$. Let $(\hat{\Gamma},\hat{\Upsilon})$ denote the estimators for $(\Sigma_x,\Sigma_x\beta^*)$ that depend only on the observed data $\{(Z_{i\cdot},y_i)\}_{i=1}^m$. As discussed in \cite{loh2012high}, an appropriate choice of the surrogate pair $(\hat{\Gamma},\hat{\Upsilon})$ for the additive error case is given by
\begin{equation*}
\hat{\Gamma}:= \frac{Z^\top Z}{m}-\Sigma_w \quad \mbox{and}\quad \hat{\Upsilon}:=\frac{Z^\top y}{m}.
\end{equation*}

Instead of assuming the regression parameter $\beta^*$ is exact sparse, we use a weaker notion to characterize the sparsity of $\beta^*$. Speciafically, we assume that for $q\in [0,1]$, and a radius $R_q>0$, $\beta^*\in \B_q(R_q)$, where
\begin{equation*}
\B_q(R_q):=\{\beta\in \R^n:||\beta||_q^q=\sum_{j=1}^n|\beta_j|^q\leq R_q\}.
\end{equation*}
Note that $\beta\in \B_0(R_0)$ corresponds to the case that $\beta$ is exact sparse, while $\beta\in \B_q(R_q)$ for $q\in (0,1]$ corresponds to the case of weak sparsity, which enforces a certain decay rate on the ordered elements of $\beta$. Throughout this paper, we fix $q\in [0,1]$, and assume that $\beta^*\in \B_q(R_q)$ unless otherwise specified. Without loss of generality, we also assume that $\|\beta^*\|_2=1$ and define $\B_2(1):=\{\beta\in \R^n\ |\ \|\beta\|_2=1\}$. Then we have $\beta^*\in \B_q(R_q)\cap \B_2(1)$.

In order to estimate the regression parameter, one considers an estimator $\hat{\beta}:\R^m \times \R^{m\times n}\to \R^n$, which is a measure function of the observed data $\{(Z_{i\cdot},y_i)\}_{i=1}^m$. In order to assess the quality of $\hat{\beta}$, one introduces a loss function $\loss(\hat{\beta},\beta^*)$, which represents the loss incurred by the estimator $\hat{\beta}$ when the true parameter $\beta^*\in \B_q(R_q)\cap \B_2(1)$. Finally, in the minimax formulism, we aim to choose an estimator that minimizes the following worst-case loss
\begin{equation*}
\min_{\hat{\beta}}\max_{\beta^*\in \B_q(R_q)\cap \B_2(1)}\loss(\hat{\beta},\beta^*).
\end{equation*}
Specifically, we shall consider the $\ell_p$-losses for $p\in [1,+\infty)$ as follows
\begin{equation*}
\loss_p(\hat{\beta},\beta^*):=\|\hat{\beta}-\beta^*\|_p^p.
\end{equation*}

We then impose some conditions on the observed matrix $Z$. The first assumption requires that the columns of $Z$ are bounded in $\ell_2$-norm.
\begin{Assumption}[Column normalization]\label{asup-normli}
There exists a constant $0<\kappa_c<+\infty$ such that
\begin{equation*}
\frac{1}{\sqrt{m}}\max_{j=1,2,\cdots,n}\|Z_{\cdot j}\|_2\leq \kappa_c.
\end{equation*}
\end{Assumption}
Our second assumption imposes a lower bound on the restricted eigenvalue of $\hat{\Gamma}$.
\begin{Assumption}[Restricted eigenvalue condition]\label{asup-rec}
There exists a constant $\kappa_l>0$ and a function $\tau_l(R_q,m,n)$ such that for all $\beta\in \B_q(2R_q)$,
\begin{equation*}
\beta^\top\hat{\Gamma} \beta\geq \kappa_l\|\beta\|_2^2-\tau_l(R_q,m,n).
\end{equation*}
\end{Assumption}
Previous researches have shown that Assumption \ref{asup-normli} and \ref{asup-rec} are satisfied by a wide range of random matrices with high probability; see, e.g., \cite{raskutti2010restricted}.

\section{Main results}

Let $\Pro_\beta$ denote the distribution of $y$ in the linear model with additive errors, when $\beta$ is given and $Z$ is observed. The following lemma tells us the Kullback-Leibler (KL) divergence between the distributions induced by two different parameters $\beta,\beta'\in \B_q(R_q)$, which is beneficial for establishing the lower bound. Recall that for two distributions $\Pro$ and $\mathbb{Q}$ which have densities $d\Pro$ and $d\mathbb{Q}$ with respect to some base measure $\mu$, the KL divergence is defined by $D(\Pro||\mathbb{Q})=\int \log\frac{d\Pro}{d\mathbb{Q}}\Pro(d\mu)$.

\begin{Lemma}\label{lem-KL}
In the additive error setting, for any $\beta,\beta'\in \B_q(\R_q)\cap \B_2(1)$, we have
\begin{equation*}
D(\Pro_\beta||\Pro_{\beta'})\leq \frac{\sigma_x^4}{2\sigma_z^2(\sigma_x^2\sigma_w^2+\sigma_z^2\sigma_\epsilon^2)}\|Z(\beta-\beta')\|_2^2.
\end{equation*}
\end{Lemma}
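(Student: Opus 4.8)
The plan is to exploit the fact that, conditionally on the observed design matrix $Z$, the response vector $y$ has a Gaussian law with independent coordinates. First I would reduce the problem to $m$ independent one-dimensional problems: since the triples $(X_{i\cdot},W_{i\cdot},e_i)$ are independent across $i$ and $y_i=\langle\beta,X_{i\cdot}\rangle+e_i$ depends only on $(X_{i\cdot},e_i)$, conditioning on $Z$ renders the $y_i$ mutually independent, so that $D(\Pro_\beta\|\Pro_{\beta'})=\sum_{i=1}^m D\big(\Pro_\beta^{(i)}\|\Pro_{\beta'}^{(i)}\big)$, where $\Pro_\beta^{(i)}$ denotes the conditional law of $y_i$ given $Z_{i\cdot}$. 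The tensorization of KL divergence over independent components is what makes this decomposition legitimate.

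The crux is to identify each $\Pro_\beta^{(i)}$ explicitly. Because $(X_{i\cdot},Z_{i\cdot})$ is jointly Gaussian with $\mathrm{Cov}(X_{i\cdot},Z_{i\cdot})=\sigma_x^2\I_n$ and $\mathrm{Var}(Z_{i\cdot})=\sigma_z^2\I_n$, the standard Gaussian conditioning formulas give $\E[X_{i\cdot}\mid Z_{i\cdot}]=(\sigma_x^2/\sigma_z^2)Z_{i\cdot}$ together with $\mathrm{Cov}(X_{i\cdot}\mid Z_{i\cdot})=(\sigma_x^2\sigma_w^2/\sigma_z^2)\I_n$. Feeding this through $y_i=\langle\beta,X_{i\cdot}\rangle+e_i$ and using the independence of $e_i$, I obtain that, conditionally on $Z_{i\cdot}$, $y_i$ is Gaussian with mean $(\sigma_x^2/\sigma_z^2)\langle\beta,Z_{i\cdot}\rangle$ and variance $(\sigma_x^2\sigma_w^2/\sigma_z^2)\|\beta\|_2^2+\sigma_e^2$. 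This attenuation of the mean, paired with the inflated conditional variance coming from the unobserved part of $X_{i\cdot}$, is the step requiring real care. The key simplification is that, since $\beta,\beta'\in\B_2(1)$ both have unit $\ell_2$-norm, the conditional variance takes the common value $v:=\sigma_x^2\sigma_w^2/\sigma_z^2+\sigma_e^2$ for both parameters.

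Finally I would invoke the closed-form KL divergence between two univariate Gaussians sharing the same variance, namely $D(N(\mu,v)\|N(\mu',v))=(\mu-\mu')^2/(2v)$. Writing $\mu_i-\mu_i'=(\sigma_x^2/\sigma_z^2)\big(Z(\beta-\beta')\big)_i$ and summing over $i$ yields $D(\Pro_\beta\|\Pro_{\beta'})=\frac{\sigma_x^4}{2\sigma_z^4\,v}\|Z(\beta-\beta')\|_2^2$; substituting $v=(\sigma_x^2\sigma_w^2+\sigma_z^2\sigma_e^2)/\sigma_z^2$ then collapses the constant to $\frac{\sigma_x^4}{2\sigma_z^2(\sigma_x^2\sigma_w^2+\sigma_z^2\sigma_e^2)}$, exactly as claimed. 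I expect the only genuine obstacle to be the second step, correctly deriving the conditional first and second moments of $y$ given $Z$; everything after that is routine algebra. In fact the stated bound holds with equality, since the unit-norm constraint $\|\beta\|_2=\|\beta'\|_2=1$ removes any dependence of the conditional variance on the parameter, so the inequality sign in the lemma is merely conservative.
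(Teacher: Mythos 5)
Your proposal is correct and follows essentially the same route as the paper: both identify the conditional law $y_i\mid Z_{i\cdot}$ as Gaussian with mean $(\sigma_x^2/\sigma_z^2)\langle\beta,Z_{i\cdot}\rangle$ and variance $(\sigma_x^2\sigma_w^2/\sigma_z^2)\|\beta\|_2^2+\sigma_\epsilon^2$, use the unit-norm constraint to make the two variances coincide so the logarithmic terms in the Gaussian KL formula cancel, and sum over the $m$ independent coordinates. Your observation that the bound is in fact an equality matches the paper's final display; the only detail you omit is the degenerate case $\sigma_w=\sigma_\epsilon=0$, which the paper dismisses as trivial.
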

\begin{proof}
For each $i=1,2,\cdots,m$ fixed, by the model setting, $(y_i,Z_{i\cdot})$ is jointly Gaussian with mean 0, and by computing the covariances, one has that
\[
\begin{bmatrix}
y_i\\
Z_{i\cdot}
\end{bmatrix}\sim
\mathcal{N}
\left(
\begin{bmatrix}
0\\
0
\end{bmatrix},
\begin{bmatrix}
\beta^\top\Sigma_x\beta+\sigma_\epsilon^2 & \beta^\top\Sigma_x\\
\Sigma_x\beta & \Sigma_x+\Sigma_w
\end{bmatrix}\right).
\]
Then it follows from standard results on the conditional distribution of Gaussian variables that
\begin{equation}\label{eq-condi}
y_i|Z_{i\cdot}\sim \mathcal{N}(\beta^\top\Sigma_x\Sigma_z^{-1}Z_{i\cdot},\beta^\top(\Sigma_x-\Sigma_x\Sigma_{z}^{-1}\Sigma_x)\beta+\sigma_\epsilon^2).
\end{equation}
Now assume that $\sigma_\epsilon$ and $\sigma_w$ are not both 0; otherwise, the conclusion holds trivially. Since $\Pro_\beta$ is a product distribution of $y_i|Z_{i\cdot}$ over all $i=1,2,\cdots,m$, we have from \eqref{eq-condi} that
\begin{equation}\label{eq-KL}
\begin{aligned}
D(\Pro_\beta||\Pro_{\beta'})&= \mathbb{E}_{\Pro_\beta}\left[\log \frac{\Pro_\beta(y)}{\Pro_{\beta'}(y)}\right]\\
&= \mathbb{E}_{\Pro_\beta}\left[\frac{m}{2}\log\left(\frac{\sigma_{\beta'}^2}{\sigma_\beta^2}\right)-\frac{\|y-Z\Sigma_z^{-1}\Sigma_x\beta\|_2^2}{2\sigma_\beta^2}+\frac{\|y-Z\Sigma_z^{-1}\Sigma_x\beta'\|_2^2}{2\sigma_{\beta'}^2}\right]\\
&=
\frac{m}{2}\log\left(\frac{\sigma_{\beta'}^2}{\sigma_\beta^2}\right)+\frac{m}{2}\left(\frac{\sigma_\beta^2}{\sigma_{\beta'}^2}-1\right)+\frac{1}{2\sigma_{\beta'}^2}\|Z\Sigma_z^{-1}\Sigma_x(\beta-\beta')\|_2^2,
\end{aligned}
\end{equation}
where $\sigma_\beta^2:=\beta^\top(\Sigma_x-\Sigma_x\Sigma_z^{-1}\Sigma_x)\beta+\sigma_\epsilon^2$, and $\sigma_{\beta'}^2$ is given analogously. Since $\Sigma_x=\sigma_x^2\I_m$, $\Sigma_w=\sigma_w^2\I_m$, and $\|\beta\|_2=1$ by the assumptions, we have that
\begin{equation*}
\sigma_\beta^2=\left(\sigma_x^2-\frac{\sigma_x^4}{\sigma_z^2}\right)\|\beta\|_2^2+\sigma_\epsilon^2=\frac{\sigma_x^2\sigma_w^2}{\sigma_z^2}+\sigma_\epsilon^2.
\end{equation*}
Substituting this equality into \eqref{eq-KL} yields that
\begin{equation*}
D(\Pro_\beta||\Pro_{\beta'})=\frac{\sigma_x^4}{2\sigma_z^2(\sigma_x^2\sigma_w^2+\sigma_z^2\sigma_\epsilon^2)}\|Z(\beta-\beta')\|_2^2.
\end{equation*}
The proof is completed.
\end{proof}

\begin{Theorem}[Lower bound on $\ell_p$-loss]\label{thm-1}
In the additive error setting, suppose that the observed matrix $Z$ satisfies Assumption \ref{asup-normli} with $0<\kappa_c< +\infty$. Then for any $p\in [1,+\infty)$, there exists a constant $c_{q,p}$ depending only on $q$ and $p$ such that, with probability at least 1/2, the minimax $\ell_p$-loss over the $\ell_q$-ball is lower bounded as
\begin{equation*}
\min_{\hat{\beta}}\max_{\beta^*\in \B_q(R_q)\cap \B_2(1)}\|\hat{\beta}-\beta^*\|_p^p\geq c_{q,p}\left[\frac{\sigma_z^2(\sigma_x^2\sigma_w^2+\sigma_z^2\sigma_\epsilon^2)}{\sigma_x^4\kappa_c^2}\right]^{\frac{p-q}{2}}R_q\left(\frac{\log n}{m}\right)^{\frac{p-q}{2}}.
\end{equation*}
\end{Theorem}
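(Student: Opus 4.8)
The plan is to prove this lower bound by Fano's method, reducing minimax estimation to a multiway hypothesis test over a carefully chosen finite subset of $\B_q(R_q)\cap\B_2(1)$. Concretely, I would construct a packing $\{\beta^1,\dots,\beta^M\}\subset\B_q(R_q)\cap\B_2(1)$ that is $2\delta$-separated in the $\ell_p$ metric and invoke Fano's inequality in its averaged form,
\[
\min_{\hat\beta}\max_{j}\Pro_{\beta^j}\big[\|\hat\beta-\beta^j\|_p\ge\delta\big]\ge 1-\frac{\frac{1}{\binom{M}{2}}\sum_{j<k}D(\Pro_{\beta^j}\|\Pro_{\beta^k})+\log 2}{\log M}.
\]
If the packing is chosen so that the averaged Kullback--Leibler term is at most $\tfrac12\log M$, the error probability is bounded below by a constant, and since any two packing points are $2\delta$-apart this forces the minimax $\ell_p^p$-loss to be of order at least $\delta^p$. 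This is precisely where Lemma~\ref{lem-KL} enters: it converts the KL divergences into the design-dependent quantities $\|Z(\beta^j-\beta^k)\|_2^2$.

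For the packing I would use the Varshamov--Gilbert device: fix a sparsity level $s$ and select binary support patterns with pairwise Hamming separation at least $s/2$ and $\log M\gtrsim s\log(n/s)\asymp s\log n$. Scaling the active coordinates to magnitude $\epsilon$ produces perturbations $u^j$ with $\|u^j-u^k\|_p^p\asymp s\epsilon^p$ and $\|u^j-u^k\|_2^2\asymp s\epsilon^2$. The unit-sphere constraint $\beta\in\B_2(1)$ is the one nonstandard feature; I would accommodate it by fixing a base vector $v_0$ supported away from all perturbation coordinates and setting $\beta^j=\sqrt{1-s\epsilon^2}\,v_0+u^j$, so that $\|\beta^j\|_2=1$ exactly while $\beta^j-\beta^k=u^j-u^k$ preserves the separation. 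Since $q\le1$ and $\|\beta\|_2=1$ force $R_q\ge\|\beta^*\|_q^q\ge\|\beta^*\|_2^q=1$, I can spend an $O(1)$ fraction of the $\ell_q$ budget on $v_0$ and the remainder on the perturbations, arranging $s\,\epsilon^q\asymp R_q$ so that every $\beta^j$ lies in $\B_q(R_q)$.

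The delicate step, and the one I expect to be the main obstacle, is controlling the averaged KL term sharply. Lemma~\ref{lem-KL} gives $D(\Pro_{\beta^j}\|\Pro_{\beta^k})\le\frac1{2\nu}\|Z(u^j-u^k)\|_2^2$ with $\nu=\sigma_z^2(\sigma_x^2\sigma_w^2+\sigma_z^2\sigma_\epsilon^2)/\sigma_x^4$, so I must bound $\|Zv\|_2^2$ for the $2s$-sparse differences $v=u^j-u^k$. The naive route through the column-normalization Assumption~\ref{asup-normli}, namely $\|Zv\|_2\le\kappa_c\sqrt m\,\|v\|_1$, is fatally lossy: it scales as $\kappa_c^2 m\,s^2\epsilon^2$ and costs a spurious factor of $s$ against $\log M$, degrading the rate. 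The cross-column products must be exploited, and they vanish in expectation because the Gaussian columns of $Z$ are independent: $\E_Z\|Zv\|_2^2=m\sigma_z^2\|v\|_2^2\asymp m\sigma_z^2 s\epsilon^2$, with the correct $\|v\|_2^2$ scaling. Averaging over the packing in expectation and applying Markov's inequality then yields, on a design event of probability at least $1/2$, $\frac1{\binom M2}\sum_{j<k}\|Z(\beta^j-\beta^k)\|_2^2\lesssim m\sigma_z^2 s\epsilon^2$; the reported $\kappa_c^2$ enters through $\sigma_z^2\lesssim\kappa_c^2$ (the maximal column norm dominates the common variance), which only weakens the constant and keeps the bound valid.

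Putting the pieces together, on the good design event the averaged KL is at most $\frac1{2\nu}\,m\sigma_z^2 s\epsilon^2$, which I would drive below $\frac12\log M\asymp s\log n$ by choosing $\epsilon^2\asymp\nu\log n/(\kappa_c^2 m)$. Fano then gives testing error at least $1/2$, and the $2\delta$-separation converts this into
\[
\min_{\hat\beta}\max_{\beta^*\in\B_q(R_q)\cap\B_2(1)}\|\hat\beta-\beta^*\|_p^p\gtrsim s\epsilon^p\asymp R_q\,\epsilon^{p-q}\asymp R_q\Big[\tfrac{\sigma_z^2(\sigma_x^2\sigma_w^2+\sigma_z^2\sigma_\epsilon^2)}{\sigma_x^4\kappa_c^2}\Big]^{\frac{p-q}{2}}\Big(\tfrac{\log n}{m}\Big)^{\frac{p-q}{2}},
\]
where the second relation uses $s\epsilon^q\asymp R_q$; this is exactly the claimed bound. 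Beyond the KL-control obstacle, the remaining work is bookkeeping: verifying $s\epsilon^2<1$ so the sphere construction is well defined (a mild high-dimensional condition of the form $R_q(\log n/m)^{1-q/2}\lesssim 1$), and tracking the $q,p$-dependent constants into $c_{q,p}$.
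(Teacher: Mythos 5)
Your proposal is a correct route to the stated rate, but it is genuinely different from the paper's argument. The paper does not use a Varshamov--Gilbert local packing at all: it packs $\B_q(R_q)$ directly in the $\ell_p$ metric and controls the mutual information by the Yang--Barron bound $I(y;B)\leq \log N_2(\epsilon)+D(\Pro_\beta\|\Pro_{\beta'})$, where the covering of $\{Z\beta:\beta\in\B_q(R_q)\}$ comes from Maurey's empirical-approximation argument (Lemma 4 of Raskutti et al.) applied to the $q$-convex hull of the columns of $Z$; this is exactly how the paper evades the lossy $\|Zv\|_2\leq\kappa_c\sqrt{m}\|v\|_1$ bound that you correctly identify as the main obstacle, and it does so deterministically, conditional on \emph{any} fixed $Z$ satisfying Assumption \ref{asup-normli}. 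Your fix instead exploits the randomness of the Gaussian design ($\E_Z\|Zv\|_2^2=m\sigma_z^2\|v\|_2^2$ plus Markov), which works but changes what the probability $1/2$ is over: you must combine a design event with the Fano testing event (costing a constant, or a retuning of the Markov and Fano thresholds), and the step $\sigma_z^2\lesssim\kappa_c^2$ is a with-high-probability property of the Gaussian design rather than a consequence of Assumption \ref{asup-normli} alone. Two further bookkeeping points: your $\log M\gtrsim s\log(n/s)$ matches the claimed $\log n$ only in the regime $s\lesssim n^{1-\gamma}$ (the paper gets $\log n$ directly from the metric-entropy bounds of Lemma 3 of Raskutti et al.), and your base-vector device for the sphere constraint needs $R_q$ bounded away from $1$ so that an $O(1)$ share of the $\ell_q$ budget remains for the perturbations. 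On the other hand, your explicit handling of the $\B_2(1)$ constraint is more careful than the paper's, which packs $\B_q(R_q)$ and silently ignores the intersection with the unit sphere.
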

\begin{proof}
Let $M_p(\delta)$ denote the cardinality of a maximal packing of the ball $\B_q(R_q)$ in the $l_p$ metric with elements $\{\beta^1,\beta^2,\cdots,\beta^M\}$. We follow the standard technique \citep{yang1999information} to transform the estimation on lower bound into a multi-way hypothesis testing problem as follows
\begin{equation}\label{eq-thm1-1}
\Pro\left(\min_{\hat{\beta}}\max_{\beta^*\in \B_q(R_q)\cap \B_2(1)}\|\hat{\beta}-\beta^*\|_p^p\geq \frac{1}{2^p}\delta^p\right)\geq
\min_{\tilde{\beta}}\Pro(B\neq \tilde{\beta}),
\end{equation}
where $B\in \R^n$ is a random variable uniformly distributed over the packing set $\{\beta^1,\beta^2,\cdots,\beta^M\}$, and $\tilde{\beta}$ is an estimator taking values in the packing set. It then follows from Fano's inequality \citep{yang1999information} that
\begin{equation}\label{eq-thm1-2}
\Pro(B\neq \tilde{\beta})\geq 1-\frac{I(y;B)+\log 2}{\log M_p(\delta)},
\end{equation}
where $I(y;B)$ is the mutual information between the random variable $B$ and the observation vector $y\in \R^m$. It now remains to upper bound the mutual information $I(y;B)$. Let $N_2(\epsilon)$ be
the minimal cardinality of an $\epsilon$-covering of $\B_q(R_q)$ in $\ell_2$-norm. From the procedure of \cite{yang1999information}, the mutual information is upper bounded as
\begin{equation}\label{eq-MI1}
I(y;B)\leq  \log N_2(\epsilon)+ D(\Pro_\beta||\Pro_{\beta'}).
\end{equation}
Let $\text{absconv}_q(Z/\sqrt{m})$ denote the $q$-convex hull of the rescaled columns of the observed matrix $Z$, that is,
\begin{equation*}
\text{absconv}_q(Z/\sqrt{m}):=\left\{\frac{1}{\sqrt{m}}\sum_{j=1}^n\theta_jZ_{\cdot j}\Big{|}\theta\in \B_q(R_q)\right\},
\end{equation*}
where the normalization $1/\sqrt{m}$ is used for convenience. Since $Z$ satisfies Assumption \ref{asup-normli}, \citet[Lemma 4]{raskutti2009minimax} is applicable to concluding that there exists a set $\{Z\tilde{\beta}^1,Z\tilde{\beta}^2,\cdots,Z\tilde{\beta}^N\}$ such that for all $Z\beta\in \text{absconv}_q(Z)$, there exists some index $i$ and some constant $c>0$ such that $\|Z(\beta-\tilde{\beta}^i)\|_2/\sqrt{m}\leq c\kappa_c\epsilon$. Combining this inequality with Lemma \ref{lem-KL} and \eqref{eq-MI1}, one has that the mutual information is upper bounded as
\begin{equation*}
I(y;B)\leq  \log N_2(\epsilon)+ \frac{\sigma_x^4}{\sigma_z^2(\sigma_x^2\sigma_w^2+\sigma_z^2\sigma_\epsilon^2)}m c^2\kappa_c^2\epsilon^2.
\end{equation*}
Thus we obtain by \eqref{eq-thm1-2} that
\begin{equation}\label{eq-thm1-5}
\Pro(B\neq \tilde{\beta})\geq 1-\frac{\log N_2(\epsilon)+\frac{\sigma_x^4}{\sigma_z^2(\sigma_x^2\sigma_w^2+\sigma_z^2\sigma_\epsilon^2)}m c^2\kappa_c^2\epsilon^2+\log 2}{\log M_p(\delta)}.
\end{equation}
It remains to choose the packing and covering set radii (i.e., $\delta$ and $\epsilon$, respectively) such that \eqref{eq-thm1-5} is strictly above zero, say bounded below by $1/2$. For simplicity, denote $\sigma^2:=\frac{\sigma_z^2(\sigma_x^2\sigma_w^2+\sigma_z^2\sigma_\epsilon^2)}{\sigma_x^4}$. Suppose that we choose the pair $(\delta,\epsilon)$ such that
\begin{subequations}\label{eq-thm1-6}
\begin{align}
\frac{c^2m}{\sigma^2}\kappa_c^2\epsilon^2&\leq \log N_2(\epsilon),\ \mbox{and} \label{eq-thm1-61}\\
\log M_p(\delta)&\geq 6\log N_2(\epsilon).\label{eq-thm1-62}
\end{align}
\end{subequations}
As long as $N_2(\epsilon)\geq 2$, it is guaranteed that
\begin{equation}\label{eq-thm1-7}
\Pro(B\neq \tilde{\beta})\geq 1-\frac{2\log N_2(\epsilon)+\log 2}{6\log N_2(\epsilon)}\geq \frac{1}{2},
\end{equation}
as desired. It remains to determine the values of the pair $(\delta,\epsilon)$ satisfying \eqref{eq-thm1-6}. By \citet[Lemma 3]{raskutti2009minimax}, we know that if $\frac{c^2m}{\sigma^2}\kappa_c^2\epsilon^2=L_{q,2}\left[R_q^{\frac{2}{2-q}}\left(\frac{1}{\epsilon}\right)^{\frac{2q}{2-q}}\log n\right]$ for some constant $L_{q,2}$ depending only on $q$, then \eqref{eq-thm1-61} is satisfied. Thus, we can choose $\epsilon$ satisfying
\begin{equation}\label{eq-epsilon}
\epsilon^{\frac{4}{2-q}}=L_{q,2}R_q^{\frac{2}{2-q}}\frac{\sigma^2}{c^2\kappa_c^2}\frac{\log n}{m}.
\end{equation}
Also it follows from \citet[Lemma 3]{raskutti2009minimax} that if $\delta$ is chosen as
\begin{equation}\label{eq-delta}
U_{q,p}\left[R_q^{\frac{p}{p-q}}\left(\frac{1}{\delta}\right)^{\frac{pq}{p-q}}\log n\right]\geq 6L_{q,2}\left[R_q^{\frac{2}{2-q}}\left(\frac{1}{\epsilon}\right)^{\frac{2q}{2-q}}\log n\right],
\end{equation}
for some constant $U_{q,p}$ depending only on $q$ and $p$, then \eqref{eq-thm1-62} holds.
Combining \eqref{eq-epsilon} and \eqref{eq-delta}, one has that
\begin{equation*}
\begin{aligned}
\delta^p &\leq \left[\frac{U_{q,p}}{6L_{q,2}}\right]^{\frac{p-q}{q}}\left(\epsilon^{\frac{4}{2-q}}\right)^{\frac{p-q}{2}}R_q^{\frac{2-p}{2-q}}\\
&= L_{q,2}^{\frac{p-q}{2}}\left[\frac{U_{q,p}}{6L_{q,2}}\right]^{\frac{p-q}{q}}R_q\left[\frac{\sigma^2}{c^2\kappa_c^2}\frac{\log n}{m}\right]^{\frac{p-q}{2}}.
\end{aligned}
\end{equation*}
Combining this inequality with \eqref{eq-thm1-7} and \eqref{eq-thm1-1}, we obtain that there exists a constant $c_{q,p}$ depending only on $q$ and $p$ such that,
\begin{equation*}
\Pro\left(\min_{\hat{\beta}}\max_{\beta^*\in \B_q(R_q)\cap \B_2(1)}\|\hat{\beta}-\beta^*\|_p^p\geq
c_{q,p}R_q\left[\frac{\sigma_z^2(\sigma_x^2\sigma_w^2+\sigma_z^2\sigma_\epsilon^2)}{\sigma_x^4\kappa_c^2}\frac{\log n}{m}\right]^{\frac{p-q}{2}}\right)\geq \frac{1}{2}.
\end{equation*}
The proof is complete.
\end{proof}
Note that the probability $1/2$ in Theorem \ref{thm-1} is just a standard convention, and it may be made arbitrarily close to $1$ by choosing the universal constants suitably.

\begin{Theorem}[Upper bound on $\ell_2$-loss]\label{thm-2}
In the additive error setting, suppose that for a universal constant $c_1$, $\hat{\Gamma}$ satisfies Assumption \ref{asup-rec} with $\kappa_l>0$ and $\tau_l(R_q,m,n)\leq c_1R_q\left(\frac{\log n}{m}\right)^{1-q/2}$. Then there exist universal constants $(c_2,c_3)$ and a constant $c_q$ denpending only on $q$ such that, with probability at least $1-c_2\exp(-c_3\log n)$, the minimax $\ell_2$-loss over the $\ell_q$-ball is upper bounded as
\begin{equation}\label{eq-thm2}
\min_{\hat{\beta}}\max_{\beta^*\in \B_q(R_q)\cap \B_2(1)}\|\hat{\beta}-\beta^*\|_2^2\leq c_q\left[\frac{\sigma_z^{2-q}(\sigma_w+\sigma_\epsilon)^{2-q}+\kappa_l^{1-q}}{\kappa_l^{2-q}}\right]R_q\left(\frac{\log n}{m}\right)^{1-q/2}.
\end{equation}
\end{Theorem}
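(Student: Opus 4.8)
The plan is to establish an upper bound on the minimax risk by exhibiting a single estimator that attains the stated rate uniformly over $\B_q(R_q)\cap\B_2(1)$. I would take the constrained corrected least-squares estimator built from the surrogate pair $(\hat\Gamma,\hat\Upsilon)$,
\[
\hat\beta \in \argmin_{\beta\in\B_q(R_q)}\left\{\tfrac12\beta^\top\hat\Gamma\beta - \langle\hat\Upsilon,\beta\rangle\right\},
\]
whose global minimizer exists by compactness of $\B_q(R_q)$ (for $q>0$) and continuity of the objective. Writing $\hat\nu:=\hat\beta-\beta^*$ and using that $\beta^*$ is feasible, optimality yields the basic inequality $\tfrac12\hat\nu^\top\hat\Gamma\hat\nu \le \langle\hat\Upsilon-\hat\Gamma\beta^*,\hat\nu\rangle \le \|\hat\Upsilon-\hat\Gamma\beta^*\|_\infty\,\|\hat\nu\|_1$. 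Since $\hat\beta,\beta^*\in\B_q(R_q)$ and $q\le1$, subadditivity of $x\mapsto x^q$ gives $\hat\nu\in\B_q(2R_q)$, so Assumption \ref{asup-rec} applies to $\hat\nu$.

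The crux, which I expect to be the main obstacle, is the deviation bound $\|\hat\Upsilon-\hat\Gamma\beta^*\|_\infty\le\lambda$ with $\lambda\asymp\sigma_z(\sigma_w+\sigma_\epsilon)\sqrt{\log n/m}$, holding with probability at least $1-c_2\exp(-c_3\log n)$. Substituting $y=X\beta^*+e$ and $Z=X+W$ into $\hat\Upsilon-\hat\Gamma\beta^*=\tfrac{Z^\top y}{m}-\bigl(\tfrac{Z^\top Z}{m}-\Sigma_w\bigr)\beta^*$ and cancelling the $\tfrac{X^\top X}{m}\beta^*$ contributions, I obtain a sum of four centered averages, $\tfrac{X^\top e}{m}$, $\tfrac{W^\top e}{m}$, $\tfrac{X^\top W\beta^*}{m}$, and $\bigl(\tfrac{W^\top W}{m}-\Sigma_w\bigr)\beta^*$. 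Each coordinate is an average of products of independent Gaussians, hence sub-exponential rather than sub-Gaussian, so I would apply a Bernstein-type tail bound followed by a union bound over the $n$ coordinates; the variance proxies evaluate to $\sigma_x\sigma_\epsilon$, $\sigma_w\sigma_\epsilon$, $\sigma_x\sigma_w$ and $\sigma_w^2$ (using $\|\beta^*\|_2=1$), and the bound $\sigma_x+\sigma_w\le\sqrt2\,\sigma_z$ collapses these into the single factor $\sigma_z(\sigma_w+\sigma_\epsilon)$. This step is the obstacle precisely because the sub-exponential tails force careful Bernstein bookkeeping and produce the exponential-in-$\log n$ failure probability.

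Given the deviation bound, I would combine the basic inequality with Assumption \ref{asup-rec}, namely $\hat\nu^\top\hat\Gamma\hat\nu\ge\kappa_l\|\hat\nu\|_2^2-\tau_l$, to obtain $\kappa_l\|\hat\nu\|_2^2\le 2\lambda\|\hat\nu\|_1+\tau_l$. To convert the $\ell_1$ term into an $\ell_2$ term via weak sparsity, I would threshold: for $\eta>0$ set $S=\{j:|\hat\nu_j|>\eta\}$, so that $\hat\nu\in\B_q(2R_q)$ forces $|S|\le 2R_q\eta^{-q}$ and $\|\hat\nu_{S^c}\|_1\le 2R_q\eta^{1-q}$, whence $\|\hat\nu\|_1\le\sqrt{2R_q\eta^{-q}}\,\|\hat\nu\|_2+2R_q\eta^{1-q}$. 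Choosing $\eta\asymp\lambda/\kappa_l$ and substituting yields a quadratic inequality $\kappa_l\|\hat\nu\|_2^2\le a\|\hat\nu\|_2+b$ with $a^2/\kappa_l^2\asymp R_q\lambda^{2-q}/\kappa_l^{2-q}$ and $b/\kappa_l\asymp R_q\lambda^{2-q}/\kappa_l^{2-q}+\tau_l/\kappa_l$, and solving gives $\|\hat\nu\|_2^2\lesssim R_q\lambda^{2-q}/\kappa_l^{2-q}+\tau_l/\kappa_l$.

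Finally I would insert the scaling $\lambda^{2-q}\asymp[\sigma_z(\sigma_w+\sigma_\epsilon)]^{2-q}(\log n/m)^{1-q/2}$ together with the hypothesis $\tau_l\le c_1R_q(\log n/m)^{1-q/2}$; writing $\tau_l/\kappa_l=\kappa_l^{1-q}\,\tau_l/\kappa_l^{2-q}$ lets the two contributions combine into $[\sigma_z^{2-q}(\sigma_w+\sigma_\epsilon)^{2-q}+\kappa_l^{1-q}]\,\kappa_l^{-(2-q)}R_q(\log n/m)^{1-q/2}$, which is exactly the claimed bound with a constant $c_q$ absorbing the $q$-dependent thresholding factors. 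The remaining work is routine constant-chasing, and the probability statement is inherited from the concentration event used for the deviation bound.
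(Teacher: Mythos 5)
Your proposal is correct and follows essentially the same route as the paper: the same constrained estimator \eqref{eq-thm2-1}, the same basic inequality combined with Assumption \ref{asup-rec}, the deviation bound $\|\hat{\Upsilon}-\hat{\Gamma}\beta^*\|_\infty\lesssim\sigma_z(\sigma_w+\sigma_\epsilon)\sqrt{\log n/m}$, the $\ell_1$-to-$\ell_2$ conversion via weak sparsity, and the resulting quadratic inequality. The only difference is that you unpack from first principles the two steps the paper handles by citation --- the four-term decomposition with Bernstein bounds is exactly \citet[Lemma 2]{loh2012high}, and the thresholding argument is exactly \citet[Lemma 5]{raskutti2011minimax}.
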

\begin{proof}
It suffices to find an estimator for $\beta^*$, which has small $\ell_2$-norm error with high probability,. We consider the estimator as follows
\begin{equation}\label{eq-thm2-1}
\hat{\beta}\in \argmin_{\beta\in \B_q(R_q)\cap \B_2(1)}\left\{\frac{1}{2}\beta^\top\hat{\Gamma}\beta-\hat{\Upsilon}^\top\beta\right\}.
\end{equation}
It is worth noting that \eqref{eq-thm2-1} involves solving a nonconvex optimization problem when $q\in [0,1)$. Since $\beta^*\in \B_q(R_q)\cap \B_2(1)$, it follows from the optimality of $\hat{\beta}$ that $\frac{1}{2}\hat{\beta}^\top\hat{\Gamma}\hat{\beta}-\hat{\Upsilon}^\top\hat{\beta}\leq \frac{1}{2}{\beta^*}^\top\hat{\Gamma}\beta^*-\hat{\Upsilon}^\top\beta^*$. Define $\hat{\Delta}:=\hat{\beta}-\beta^*$, and thus $\hat{\Delta}\in \B_q(2R_q)$. Then one has that
\begin{equation*}
\hat{\Delta}^\top\hat{\Gamma}\hat{\Delta}\leq 2\langle \Delta, \hat{\Upsilon}-\hat{\Gamma}\beta^*\rangle.
\end{equation*}
This inequality, together with the assumption that $\hat{\Gamma}$ satisfies Assumption \ref{asup-rec}, implies that
\begin{equation}\label{eq-thm2-3}
\kappa_l\|\hat{\Delta}\|_2^2-\tau_l(R_q,m,n)\leq 2\langle \hat{\Delta}, \hat{\Upsilon}-\hat{\Gamma}\beta^*\rangle\leq 2\|\hat{\Delta}\|_1\|\hat{\Upsilon}-\hat{\Gamma}\beta^*\|_\infty.
\end{equation}
It then follows from \citet[Lemma 2]{loh2012high} that there exist universal constants $(c_2,c_3,c_4)$ such that, with probability at least $1-c_2\exp(-c_3\log n)$,
\begin{equation}\label{eq-thm2-4}
\|\hat{\Upsilon}-\hat{\Gamma}\beta^*\|_\infty\leq c_4\sigma_z(\sigma_w+\sigma_\epsilon)\|\beta^*\|_2\sqrt{\frac{\log n}{m}}=c_4\sigma_z(\sigma_w+\sigma_\epsilon)\sqrt{\frac{\log n}{m}}.
\end{equation}
Combining \eqref{eq-thm2-3} and \eqref{eq-thm2-4}, one has that
\begin{equation*}
\kappa_l\|\hat{\Delta}\|_2^2\leq 2c_4\sigma_z(\sigma_w+\sigma_\epsilon)\sqrt{\frac{\log n}{m}}\|\hat{\Delta}\|_1+\tau_l(R_q,m,n).
\end{equation*}
Introduce the shorthand $\sigma:=\sigma_z(\sigma_w+\sigma_\epsilon)$.
Recall that $\hat{\Delta}\in \B_q(2R_q)$. It then follows from \citet[Lemma 5]{raskutti2011minimax} (with $\tau=\frac{2c_4\sigma}{\kappa_l}\sqrt{\frac{\log n}{m}}$) and the assumption $\tau_l(R_q,m,n)\leq c_1R_q\left(\frac{\log n}{m}\right)^{1-q/2}$ that
\begin{equation*}
\|\hat{\Delta}\|_2^2\leq \sqrt{2R_q}\left(\frac{2c_4\sigma}{\kappa_l}\sqrt{\frac{\log n}{m}}\right)^{1-q/2}\|\hat{\Delta}\|_2+2R_q\left(\frac{2c_4\sigma}{\kappa_l}\sqrt{\frac{\log n}{m}}\right)^{2-q}+\frac{c_1}{\kappa_l}R_q\left(\frac{\log n}{m}\right)^{1-q/2}.
\end{equation*}
Therefore, by solving this inequality with the indeterminate viewed as $\|\hat{\Delta}\|_2$, we obtain that there exists a constant $c_q$ depending only on $q$ such that, \eqref{eq-thm2} holds with probability at least $1-c_2\exp(-c_3\log n)$.
The proof is complete.
\end{proof}

\begin{Remark}
{\rm (i)} The lower and upper bounds for minimax rates are dependent on the triple $(m,n,\R_q)$, the error level, and the observed matrix $Z$, as shown in Theorems \ref{thm-1} and \ref{thm-2}. Specifically, by setting $p=2$ in Theorem \ref{thm-1}, the lower and upper bounds agree up to constant factors, showing the optimal minimax rates in the additive error case.

{\rm (ii)} Note that when $p=2$ and $q=0$ (i.e., the exact sparse case), the minimax rate scales as $\Theta\left(R_0\frac{\log n}{m}\right)$. In the regime when  $n/R_0\sim n^\gamma$ for some constant $\gamma>0$, the rate is equivalent to $R_0\frac{\log(n/R_0)}{m}$ (up to constant factors), which re-capture the same scaling as in \citet{loh2012corrupted}.
\end{Remark}

\section{Conclusion}

We focused on the information-theoretic limitations of estimation for sparse linear regression with additive errors under the high-dimensional scaling. Further research may generalize the current result to sub-Gaussian matrices with non-diagonal covariances, or other types of measurement errors, such as the multiplicative error.



\bibliographystyle{elsarticle-harv}
\bibliography{./minimax-arxiv.bbl}


%
%
%
\end{document}